\documentclass[12pt,reqno,tbtags]{amsart}
\usepackage{amsmath,amsfonts,amssymb,enumerate}


\newtheorem{theorem}{Theorem}[section] 

\newtheorem*{theorem*}{Theorem} \newtheorem{lemma}[theorem]{Lemma}
\newtheorem{proposition}[theorem]{Proposition}

\theoremstyle{definition}{

   }

\theoremstyle{remark}{

  \newtheorem{remark}{Remark}[section] \newtheorem*{remark*}{Remark}
   }

\begin{document}

\newcommand{\rr}{\mbox{\boldmath $r$}}
\def\mod#1{\ifmmode\;\,\hbox{\rm (mod $#1$)}\else$(\mathop{\rm mod}{#1})$\fi}

\title{On the Asymptotic Existence of Hadamard Matrices}

 \author{Warwick de Launey}
 \address{Center for Communications Research \\
        Institute For Defense Analyses \\
	4320 Westerra Court \\
	San Diego, California, CA92121}
 \email{warwick@ccrwest.org}
 \email{warwickdelauney@earthlink.net}
 \thanks{Work by a Contractor to the US Government}

\begin{abstract}{It is conjectured that Hadamard matrices exist for all
   orders $4t$ ($t>0$).  However, despite a sustained effort over more
   than five decades, the strongest overall existence results are
   asymptotic results of the form: for all odd natural numbers $k$,
   there is a Hadamard matrix of order $k2^{[a+b\log_2k]}$, where $a$
   and $b$ are fixed non-negative constants.  To prove the Hadamard
   Conjecture, it is sufficient to show that we may take $a=2$ and
   $b=0$.  Since Seberry's ground-breaking result, which showed that
   we may take $a=0$ and $b=2$, there have been several improvements
   where $b$ has been by stages reduced to $3/8$.  In this paper, we
   show that for all $\epsilon>0$, the set of odd numbers $k$ for
   which there is a Hadamard matrix of order $k2^{2+[\epsilon\log_2k]}$
   has positive density in the set of natural numbers.  The proof
   adapts a number-theoretic argument of Erdos and Odlyzko to show
   that there are enough Paley Hadamard matrices to give the result.}
\end{abstract}

 \keywords{Hadamard matrices, asymptotic existence, cocyclic Hadamard
   matrices, relative difference sets, Riesel numbers, Extended
   Riemann Hypothesis.}

\maketitle

\section{Overview}
As noted in the abstract, there have been incremental improvements in
the power of known asymptotic existence results for Hadamard matrices
\cite{Crai95, CrKhHo97, Sebe76}.  These theorems all have the form:
For all positive odd integers $k$, there is a Hadamard matrix of order
$k2^{[a+b\log_2k]}$, where $a$ and $b$ are fixed non-negative real numbers.
The strength of the result depends on how close $b$ is to zero, and
then on how close $a$ is to zero.  In this paper, we adapt a
number-theoretic argument of Erdos and Odlyzko \cite{ErOd79} to prove
the following theorem.
\begin{theorem}\label{MainThm}
  Let $\epsilon>0$.  Let $H(x)$ denote the number of odd positive
  integers $k\leq x$ for which there is a Hadamard matrix of order
  $2^{\ell}k$, for some positive integer $\ell\leq2+\epsilon\log_2 k$.
  Then there is a constant $c_1(\epsilon)$, dependent only on
  $\epsilon$, such that, for all sufficiently large $x$,
  $H(x)>c_1(\epsilon)x$.
\end{theorem}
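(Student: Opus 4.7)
\medskip

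\noindent \textbf{Proof plan.}

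\emph{Reduction to prime production.} The Paley Type I construction yields a Hadamard matrix of order $p+1$ whenever $p$ is a prime with $p \equiv 3 \pmod 4$. Kronecker products with Sylvester Hadamard matrices of order $2^s$, $s \geq 0$, then produce Hadamard matrices of every order $2^s(p+1)$. Writing $p + 1 = 2^a k$ with $k$ odd, one obtains a Hadamard matrix of order $2^\ell k$ for all $\ell \geq a$, and the congruence $p \equiv 3 \pmod 4$ is automatic when $a \geq 2$. Thus Theorem \ref{MainThm} reduces to the following arithmetic claim: for a positive density of odd positive integers $k$, there exists an integer $a$ with $2 \leq a \leq 2 + \epsilon \log_2 k$ such that $2^a k - 1$ is prime.

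\emph{Moment method over a dyadic range.} Fix a large parameter $N$ and restrict attention to odd $k \in (N/2, N]$. Set $A = \lfloor 2 + \epsilon \log_2 N \rfloor$ and $P(k) = \#\{\, a \in [2, A] : 2^a k - 1 \text{ is prime}\,\}$. For each fixed $a$, the primes of the form $2^a k - 1$ arising from this range of $k$ are exactly the primes in $(2^{a-1}N, 2^a N]$ lying in the residue class $-1 \pmod{2^a}$; since $2^a \leq N^{\epsilon}$, the prime number theorem in arithmetic progressions (unconditional in this range) gives $\asymp N / \log N$ such primes per $a$, hence $\sum_k P(k) \gg \epsilon N$. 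A Selberg upper bound sieve, applied for each pair $a < b$ to the product $(2^a X - 1)(2^b X - 1)$, bounds the number of $k$ making both $2^a k - 1$ and $2^b k - 1$ prime by $O(N / \log^2 N)$; summing over the $O(\epsilon^2 \log^2 N)$ pairs yields $\sum_k P(k)^2 \ll \epsilon N$. Cauchy--Schwarz in the form
\[
   \Big(\sum_k P(k)\Big)^2 \leq \Big(\sum_k P(k)^2\Big) \cdot \#\{k : P(k) \geq 1\}
\]
then forces $\#\{k : P(k) \geq 1\} \gg \epsilon N$, which combines with the reduction above to give the desired density statement with $c_1(\epsilon) \asymp \epsilon$.

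\emph{Main obstacle.} The delicate point, and the reason the Erd\H{o}s--Odlyzko argument \cite{ErOd79} is indispensable, is that the Selberg singular series for $(2^a X - 1)(2^b X - 1)$ encodes a covering-congruence obstruction of Riesel type; controlling it \emph{uniformly} in the pair $a,b$ across the logarithmic window $[2,A]$ is what demands care. This is where the logarithmic length of the window is critical (a shorter window would be swallowed by covering systems in the spirit of Riesel numbers), and where the Extended Riemann Hypothesis cited in the keywords likely enters, to give uniform control on primes in arithmetic progressions with moduli depending on $a$. Making the implied constants independent of $\epsilon$ and $N$ simultaneously is the most demanding technical step; everything else is a relatively routine sieve/PNT calculation.
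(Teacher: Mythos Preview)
Your reduction and the two-moment/Cauchy--Schwarz scheme are exactly the paper's approach: the paper reduces Theorem~\ref{MainThm} to a number-theoretic statement (its Theorem~1.2, counting odd $k\le x$ with $2^m k-1$ prime for some $m\le\epsilon\log_2 k$), and proves the latter via Erd\H{o}s--Odlyzko's method---a lower bound on $\sum_k S(k,x)$ from primes in the progressions $-1\pmod{2^{\ell+1}}$, an upper bound $\sum_k S(k,x)^2\le c_5(\epsilon)x$ quoted directly from \cite{ErOd79}, and Cauchy--Schwarz. So your first two paragraphs match the paper essentially line for line.

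Your third paragraph, however, misreads where the difficulty lies and introduces a hypothesis the paper does not use. The proof of Theorem~\ref{MainThm} is \emph{unconditional}: the Extended Riemann Hypothesis appears in the paper only for the \emph{upper-bound} Proposition (a limitation on how large $c_2(\epsilon)$ can be), never in the existence proof. For the first moment the paper does not invoke the prime number theorem in arithmetic progressions in the Siegel--Walfisz sense; it quotes a pointwise lower bound $\pi(x;2^{\ell+1},2^\ell-1)\ge c_7 x/(2^\ell\log x)$ valid once $x\ge (2^{\ell+1})^{c_8}$ (Erd\H{o}s--Odlyzko, Lemma~1), which is unconditional for these power-of-two moduli. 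For the second moment, the singular-series factors attached to pairs $(a,b)$ do vary with $b-a$, and the work in \cite{ErOd79} is precisely to show that their \emph{average} over the window is bounded; but this is a routine sieve calculation, not a covering-system phenomenon, and it needs no ERH input. The Riesel/covering obstruction is relevant only to why this method cannot push $c_1(\epsilon)$ all the way to $1/2$, not to obtaining \emph{some} positive $c_1(\epsilon)$. Drop the ERH caveat and the covering-system framing, and your outline is the paper's proof.
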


Our approach is to prove the following number-theoretic result.
\begin{theorem}\label{MainNumberThm}
  Let $\epsilon>0$.  Let $M_\epsilon(x)$ denote the number of odd
  positive integers $k\leq x$ for which $2^mk-1$ is prime for some
  positive integer $m\leq\epsilon\log_2k$.  Then there is a constant
  $c_2(\epsilon)$, dependent only on $\epsilon$, such that for all
  sufficiently large $x$, $M_\epsilon(x)>c_2(\epsilon)x$.
\end{theorem}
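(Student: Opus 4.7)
My plan is to prove Theorem \ref{MainNumberThm} by a Cauchy--Schwarz / second-moment argument in the spirit of Erd\H{o}s--Odlyzko: lower-bound the total count of valid prime representatives, and upper-bound how often a single $k$ is covered by more than one, showing these have comparable orders of magnitude. Without loss of generality I assume $\epsilon<1/2$, since the theorem for larger $\epsilon$ follows from any smaller one. I restrict attention to odd $k\in(x/2,x]$; because $\log_2 k\ge\log_2 x-1$ there, the hypothesis $m\le\epsilon\log_2 k$ is automatic for every $m\le M:=\lfloor\epsilon(\log_2 x-1)\rfloor$. Defining $N(k):=\#\{1\le m\le M:2^m k-1\text{ is prime}\}$, one has
\[
  M_\epsilon(x)\;\ge\;\#\{k\in(x/2,x]\text{ odd}:N(k)\ge 1\}\;\ge\;\frac{(\sum_k N(k))^2}{\sum_k N(k)^2},
\]
with $k$ ranging over odd integers in $(x/2,x]$, so it suffices to show both moments are of order $\epsilon x$.

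For the first moment I would swap order of summation to get $\sum_k N(k)=\sum_{m=1}^M\bigl[\pi(2^mx;2^{m+1},2^m-1)-\pi(2^{m-1}x;2^{m+1},2^m-1)\bigr]$, each inner term counting primes $p=2^mk-1$ in a fixed residue class modulo $2^{m+1}$. Since the modulus is $2^{m+1}\le 2x^\epsilon$, the prime number theorem in arithmetic progressions--either under the Extended Riemann Hypothesis (as alluded to in the paper's keywords) applied to each individual modulus, or unconditionally via Bombieri--Vinogradov handling the entire range in one shot--gives each difference as $\sim\tfrac{x/2}{\log(2^mx)}\gg x/\log x$ uniformly for $m\le M$. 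Summing then yields $\sum_k N(k)\gg Mx/\log x\asymp\epsilon x$.

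For the second moment the diagonal contributes $\sum_k N(k)=O(\epsilon x)$, and the off-diagonal is $2\sum_{m_1<m_2}D(m_1,m_2)$, where $D(m_1,m_2)$ counts odd $k\in(x/2,x]$ with both $2^{m_1}k-1$ and $2^{m_2}k-1$ prime. Setting $d=m_2-m_1$ and substituting $n=2^{m_1}k-1$ reduces this to counting primes $n$ in a fixed residue class mod $2^{m_1+1}$ such that $2^dn+(2^d-1)$ is also prime. The Selberg upper-bound sieve then delivers
\[
  D(m_1,m_2)\;\ll\;\frac{x}{\log^2x}\,S(d),\qquad S(d):=\!\!\prod_{\substack{p\mid 2^d-1\\ p>2}}\!\!\frac{p-1}{p-2},
\]
the factor $\phi(2^{m_1+1})=2^{m_1}$ from the sieve exactly cancelling the length inflation $2^{m_1}$. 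The delicate analytic step--and in my view the main obstacle--is the averaged bound $\sum_{d=1}^MS(d)=O(M)$: a pointwise estimate such as $S(d)\ll d$ would give only $\sum_{d\le M}S(d)\ll M^2$ and destroy the positive-density conclusion. I would establish the averaged bound by expanding $S(d)$ as a sum over odd squarefree divisors $n$ of $2^d-1$, swapping the order of summation, using $\#\{d\le M:n\mid 2^d-1\}\le M/\mathrm{ord}_n(2)$, and invoking the elementary inequality $\mathrm{ord}_n(2)\ge\log_2 n$ to reduce the first-order correction to the convergent sum $\sum_p 1/(p\log p)$; the higher-order corrections are handled analogously via $\mathrm{lcm}$s of multiplicative orders, and this is precisely where Erd\H{o}s--Odlyzko's observation about the sparsity of small primes dividing $2^d-1$ does the heavy lifting.

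Granted the averaged bound, the off-diagonal is $\ll(xM/\log^2 x)\cdot O(M)\asymp\epsilon^2 x$, hence $\sum_k N(k)^2\ll\epsilon x$. Cauchy--Schwarz then yields $M_\epsilon(x)\gg\epsilon x$, so one may take $c_2(\epsilon)$ as an absolute constant multiple of $\epsilon$, proving the theorem.
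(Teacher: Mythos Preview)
Your overall strategy---Cauchy--Schwarz on the representation count $N(k)$, with a first-moment lower bound via primes in arithmetic progressions and a second-moment upper bound via the upper-bound sieve together with the averaged estimate $\sum_{d\le M}S(d)=O(M)$---is exactly the Erd\H{o}s--Odlyzko argument the paper adapts. Your restriction to $k\in(x/2,x]$ is a clean device for passing from $\log_2 k$ to $\log_2 x$; the paper instead proves the auxiliary Lemma~\ref{NumberLemma} with $m<\epsilon\log_2 x$ and then deduces equivalence with Theorem~\ref{MainNumberThm}, but the effect is the same. Your sketch of the second moment, including the substitution $n=2^{m_1}k-1$, the appearance of the singular factor $S(d)$, and the order-averaging argument for $\sum_{d\le M}S(d)=O(M)$, is correct and is precisely the content of Erd\H{o}s--Odlyzko's Lemma~2, which the paper simply cites (as Lemma~\ref{ErOdLemma1}) without unpacking.

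There is, however, one real gap: Bombieri--Vinogradov does \emph{not} deliver the unconditional first-moment lower bound. You need, for each $m\le M$, a lower bound on $\pi(2^m x;2^{m+1},2^m-1)-\pi(2^{m-1}x;2^{m+1},2^m-1)$, with moduli running up to $2^{M+1}\asymp x^\epsilon$. Bombieri--Vinogradov controls only the \emph{sum} of maximal errors over all $q\le Q$; applied at the top scale $Y=2^Mx$ it yields a total error $\ll Y/(\log Y)^A\asymp x^{1+\epsilon}/(\log x)^A$, which for every fixed $A$ swamps the main term $\asymp\epsilon x$. Invoking ERH instead would make the theorem conditional, contrary to its statement. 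The paper's route is to quote Erd\H{o}s--Odlyzko's Lemma~1, restated here as Lemma~\ref{ErOdLemma2}: there exist absolute constants $c_7,c_8$ with $\pi(x;2^{\ell+1},2^\ell-1)\ge c_7x/(2^\ell\log x)$ whenever $x\ge(2^{\ell+1})^{c_8}$. This is an \emph{individual} lower bound, valid once the argument exceeds a fixed power of the modulus; it is available here because every real Dirichlet character modulo $2^{\ell+1}$ is induced from conductor dividing~$8$, so there is no Siegel-zero obstruction and one obtains an honest asymptotic term by term. Once you swap this lemma in for Bombieri--Vinogradov (and, as the paper does, restrict initially to $\epsilon\le 1/c_8$), the rest of your argument goes through unchanged.
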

Since there is a Paley Hadamard matrix of order $q+1$, when
$q\equiv3\mod{4}$ is prime, and of order $2(p+1)$, when
$p\equiv1\mod{4}$ is prime, taking a Kronecker product with the
Sylvester Hadamard matrix of appropriate order, we have, for any odd
prime $p$, a Hadamard matrix of order $2^m(p+1)$, where
\[
m\geq
\left\{
\begin{array}{ll}
1 & \mbox{if $p\equiv1\mod{4}$,}\\
0 & \mbox{if $p\equiv3\mod{4}$.}  
\end{array}
\right.
\]
So, for $k$ odd, when $2^mk-1$ equals a prime $p$, for some positive
integer $m<\epsilon\log_2k$, there is a Hadamard matrix of order
$2^mk$ if $m>1$, and of order $2^{2}k$ if $m=1$.  So, in either case,
there is a Hadamard matrix of order $2^\ell k-1$, for some positive integer
$\ell\leq2+\epsilon\log_2k$.  Therefore, Theorem~\ref{MainNumberThm}
certainly implies Theorem~\ref{MainThm} with
$c_1(\epsilon)=c_2(\epsilon)$.
\begin{remark} 
  1.  Since the Hadamard matrices used above are all cocyclic (see
  \cite{DeFlHo00} for this fact and a discussion of cocyclic Hadamard
  matrices), Theorem~\ref{MainThm} also holds for cocyclic Hadamard
  matrices.  Thus we have an asymptotic existence result for a certain
  class of relative difference sets.\newline 2.  Since there is a
  Sylvester matrix of order $4$, there is a constant $c_1'(\epsilon)$
  dependent only on $\epsilon$, such that $H(x)\geq c_1'(\epsilon)\,x$
  for all $x\geq1$.
\end{remark}

There are inherent limitations to our approach.  The Hadamard Conjecture
implies that we may take $c_1(\epsilon)=1/2$.  However, the following holds.
\begin{proposition}\label{Proposition}
  There is a positive number $\delta_0$, such that, for all
  $\epsilon>0$, there are infinitely many $x\geq0$ for which
  $M_\epsilon(x) \leq \frac{1}{2}(1-\delta_0)x$.  Moreover, if the
  Extended Riemann Hypothesis holds, then for all $\epsilon,\delta>0$,
  we have $M_\epsilon(x)\leq x2(1+\delta)\log_2(1+\epsilon)$ for all
  sufficiently large $x$.
\end{proposition}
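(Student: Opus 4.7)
The plan is to prove the two assertions by separate arguments: a covering-congruence construction due to Riesel for the unconditional statement, and the ERH-conditional prime number theorem for arithmetic progressions, together with a straightforward summation, for the sharper upper bound.

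For the unconditional assertion, I would invoke Riesel's 1956 construction of an infinite arithmetic progression of odd integers $k$ (characterised by congruences modulo some fixed $N$) such that $2^m k-1$ is divisible by one of a finite fixed list of primes for every $m\geq1$. All but finitely many such $k$ are therefore \emph{Riesel numbers}: $2^m k-1$ is composite for every positive integer $m$, independently of $\epsilon$. The set of these $k$ has density at least $1/N$ in the odd integers, so setting $\delta_0=1/N$ and subtracting these $k$ from the odd integers counted by $M_\epsilon(x)$ yields, for every $\epsilon>0$ and all sufficiently large $x$,
\[
M_\epsilon(x)\leq \tfrac{1}{2}(1-\delta_0)x,
\]
which is stronger than the claimed ``infinitely many $x$'' form.

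For the conditional assertion, assume ERH and bound $M_\epsilon(x)$ by the union bound
\[
M_\epsilon(x)\leq\sum_{m=1}^{\lfloor\epsilon\log_2 x\rfloor}\pi(2^m x;\,2^m,-1),
\]
valid because $m\leq\epsilon\log_2 k\leq\epsilon\log_2 x$. The ERH-conditional PNT for arithmetic progressions gives $\pi(y;q,a)=\mathrm{Li}(y)/\varphi(q)+O(\sqrt{y}\log(qy))$ for $\gcd(a,q)=1$. Substituting $q=2^m$, $y=2^m x$, the main term is $\mathrm{Li}(2^m x)/2^{m-1}\sim 2x/(\log 2\cdot(m+\log_2 x))$, and the Riemann-sum comparison
\[
\sum_{m=1}^{\lfloor\epsilon\log_2 x\rfloor}\frac{1}{m+\log_2 x}\longrightarrow\log(1+\epsilon)\qquad(x\to\infty)
\]
yields a total main contribution of $2x\log_2(1+\epsilon)\cdot(1+o(1))$. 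The factor $(1+\delta)$ absorbs the $o(1)$ once $x$ is large enough.

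The main obstacle will be controlling the ERH error term uniformly in $m$. Since $q=2^m\leq x^\epsilon$ and $y=2^m x\leq x^{1+\epsilon}$, the per-$m$ error is $O(x^{(1+\epsilon)/2}\log x)$, and summing over $O(\log x)$ terms gives $O(x^{(1+\epsilon)/2}\log^2 x)$, which is $o(x)$ exactly when $\epsilon<1$. This is enough: for $\epsilon\geq1$ the target bound $2(1+\delta)\log_2(1+\epsilon)\cdot x$ already exceeds $x$ and is trivially satisfied. The remaining ingredients---Riesel's covering congruence construction and the asymptotic evaluation of the main term---are classical or routine.
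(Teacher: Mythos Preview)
Your proposal is correct and follows essentially the same route as the paper: Riesel's covering-congruence arithmetic progression for the unconditional part, and for the ERH part the union bound $M_\epsilon(x)\le\sum_m\pi(\,\cdot\,)$, an ERH-conditional estimate for $\pi$, the observation that the error term is $o(x)$ precisely when $\epsilon<1$ (with the remaining range being trivial), and an integral/Riemann-sum evaluation of the main term. The only cosmetic difference is that you count modulo $2^m$ (so $\varphi=2^{m-1}$) and invoke the sharp ERH asymptotic $\pi\sim\mathrm{Li}/\varphi$, whereas the paper counts modulo $2^{m+1}$ (so $\varphi=2^m$) but proves and uses a cruder bound $\pi(x;q,a)<2x/(\varphi(q)\log x)+O(x^{1/2}\log x)$ via partial summation on $\psi$; the two stray factors of $2$ cancel and both arguments land on the same leading term $2x\log_2(1+\epsilon)$.
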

The part concerning the Extended Riemann Hypothesis will be proved in
Section~\ref{PropositionSection}.  We explain now why the first part
of the proposition holds.  H.  Riesel \cite{Ries56} showed that there
are infinitely many odd numbers $k$ for which $2^mk-1$ is always
composite.  The smallest known such number is $509203$.  Indeed, it
can be shown that for any $m\geq0$, at least one of the primes
$3,5,7,13,17$ or $241$ divides $2^mk-1$, where $k=509203+11184810r$,
and $r$ is any non-negative integer.  Consequently, for any
$\epsilon>0$, there are infinitely many $x\geq0$ such that
$M_\epsilon(x)\leq x(1 -1/11184810)/2$.  This proves the first part of
the proposition, and shows that the approach to $c_1(\epsilon)$ via
$c_2(\epsilon)$ will fail once $c_1(\epsilon)$ becomes close enough to
$1/2$.  Indeed, the proposition says that if the Extended Riemann
Hypothesis holds, then the approach will fail once we ask that
$c_1(\epsilon)$ exceed $2\log_2(1+\epsilon)$, a number which, when
$\epsilon$ is small, is far less than 1/2.

On the other hand, by using more involved number-theoretic arguments
to explore the scope of known constructions for Hadamard matrices, one
might be able to show that $c_1(\epsilon)$ can be taken very close to
1/2.  Aside from offering us a way to prove strong asymptotic existence
results for Hadamard matrices, this approach has the advantage of
giving us a measure of how far we have come towards proving the
Hadamard Conjecture.  We now describe a replacement constant
$c_3(\epsilon)$ for $c_2(\epsilon)$.

Notice that if there is a Hadamard matrix of order
$k_i2^{[\epsilon\log_2k_i]}$ for $i=1,2,\dots,n$, then there is a
Hadamard matrix of order $k2^{[\epsilon\log_2k]}$ for $k=k_1k_2\dots
k_n$.  So define $M'_\epsilon(x)$ to be the number of odd positive
integers $k$ with the property P, say, that $k=k_1k_2\dots k_n$ where,
for $i=1,2,\dots,n$, there are $m_i\leq\epsilon\log_2k_i$ such that
$k_i2^{m_i}-1$ is prime for $i=1,2,\dots,n$.  Clearly,
Theorem~\ref{MainNumberThm} implies that there is a constant
$c_3(\epsilon)>0$ such that $M'_\epsilon(x)\geq c_3(\epsilon)x$ for
all sufficiently large $x$.  Notice that, by the Prime Number Theorem
for primes in arithmetic progression, there are infinitely many Riesel
numbers which are prime.  Therefore there are infinitely many numbers
which do not have property P.  However, notice that if $k_1$ and $k_2$
have property P, then so does $k=k_1k_2$, and, since most large
numbers $k$ can be written in the form $k_1k_2$ in many ways, it seems
likely that almost all numbers have property P.  So more complicated
counting arguments along the lines of those described in this paper
might be used to prove that $c_1(\epsilon)$ can be taken very close to
1/2.

The rest of this paper is organized as follows.  In
Section~\ref{TheoremSection}, we prove Theorem~\ref{MainNumberThm}.
Then in Section~\ref{PropositionSection}, assuming a lemma concerning
the Extended Riemann Hypothesis, we prove
Proposition~\ref{Proposition}.  Finally, in
Section~\ref{LemmaSection}, we prove the lemma needed in
Section~\ref{PropositionSection}.

 \section{Proof of  Theorem~1.2}\label{TheoremSection}
We adapt an argument of Erdos and Odlyzko \cite{ErOd79} to prove the
following lemma.  The lemma and our comments about how large one can
take $c_4(\epsilon)$ is of independent number-theoretic interest.
\begin{lemma}\label{NumberLemma}
  Let $\epsilon>0$.  Let $N_\epsilon(x)$ denote the number of positive
  integers $k\leq x$ for which $2^mk-1$ is prime for some positive
  integer $m<\epsilon\log_2x$.  Then there is a constant
  $c_4(\epsilon)$, dependent only on $\epsilon$, such that, for all
  sufficiently large $x$, $N_\epsilon(x)>c_4(\epsilon)x$.
\end{lemma}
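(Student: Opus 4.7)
The plan is to adapt the second-moment (Cauchy--Schwarz) argument of Erd\H os and Odlyzko. Set $M=\lfloor\epsilon\log_2 x\rfloor$ and, for each positive integer $k\le x$, define
\[
f(k)=\#\{\,m\in[1,M]:2^mk-1\text{ is prime}\,\},\quad T(x)=\sum_{k\le x}f(k),\quad T_2(x)=\sum_{k\le x}f(k)^{2}.
\]
By Cauchy--Schwarz, $N_\epsilon(x)=\#\{k\le x:f(k)\ge 1\}\ge T(x)^{2}/T_2(x)$, so it suffices to prove $T(x)\gg_\epsilon x$ and $T_2(x)\ll_\epsilon x$; by monotonicity of $N_\epsilon$ in $\epsilon$ we may restrict attention to $\epsilon<1$.

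For the first moment I interchange summation to get $T(x)=\sum_{m=1}^{M}\pi(2^m x;2^m,-1)$. The prime number theorem in arithmetic progressions heuristically gives $\pi(2^m x;2^m,-1)\sim 2x/((m+\log_2 x)\ln 2)$, and summing over $m$ yields $T(x)\sim 2\log_2(1+\epsilon)\,x$, matching the upper bound of Proposition~\ref{Proposition}. Rigorously, Siegel--Walfisz handles the moduli $2^m$ with $m\ll\log\log x$; the Bombieri--Vinogradov theorem (applicable since $2^M=x^\epsilon\le x^{(1+\epsilon)/2-o(1)}$ for $\epsilon<1$) together with quantitative Linnik-type lower bounds for primes in progressions modulo $2^m$ handles the remaining moduli, yielding $T(x)\ge c_T(\epsilon)\,x$.

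For the second moment I expand
\[
T_2(x)=T(x)+2\sum_{1\le m_1<m_2\le M}R(m_1,m_2;x),
\]
where $R(m_1,m_2;x)$ counts $k\le x$ with both $2^{m_1}k-1$ and $2^{m_2}k-1$ prime. Setting $d=m_2-m_1$ and $p=2^{m_1}k-1$, this is the number of primes $p\equiv -1\pmod{2^{m_1}}$ with $2^d p+(2^d-1)$ also prime; the Selberg upper-bound sieve for a pair of linear forms gives $R(m_1,m_2;x)\ll\mathfrak{S}(d)\,x/(\log x)^{2}$, where $\mathfrak{S}(d)$ is a Hardy--Littlewood singular series bounded by $C\prod_{p\mid 2(2^d-1)}(1+1/p)$.

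The most delicate step is the singular-series average $\sum_{d\le M}\mathfrak{S}(d)\ll M$, the analogue for our linear forms of Chowla's average bound for the twin-prime singular series; it follows from a divisor-type estimate such as $\mathfrak{S}(d)\ll\sigma(2^d-1)/(2^d-1)$ together with standard multiplicative manipulations. Granted this, the off-diagonal contribution to $T_2(x)$ is $\ll xM^{2}/(\log x)^{2}\ll_\epsilon x$, so $T_2(x)\ll_\epsilon x$, and Cauchy--Schwarz produces $N_\epsilon(x)\gg_\epsilon x$, giving the desired $c_4(\epsilon)>0$. Of the two key ingredients, the first-moment lower bound is the more challenging: it requires uniform PNT-in-APs control across the full range of moduli $2^m\le x^\epsilon$, and this is where the adaptation of the Erd\H os--Odlyzko argument is most subtle.
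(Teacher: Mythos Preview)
Your outline matches the paper's approach: both use the Cauchy--Schwarz/second-moment method of Erd\H{o}s and Odlyzko, reducing the problem to a lower bound on the first moment $T(x)$ and an upper bound on the second moment $T_2(x)$.  For the second moment the paper simply quotes Erd\H{o}s--Odlyzko's Lemma~2 (their inequality for $\sum R(k,x)^2$ with $r=1$, $p_1=2$), whose proof is indeed the upper-bound sieve argument you sketch.

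There is, however, a genuine confusion in your treatment of the first moment.  Bombieri--Vinogradov is an \emph{averaged} statement over all moduli $q\le Q$; it says nothing about the individual moduli $q=2^m$ you need, and a single exceptional modulus in the Bombieri--Vinogradov range could in principle absorb the entire allowed error.  (Note also that the range of summation in $T(x)$ varies with $m$, so even the formal hypotheses of Bombieri--Vinogradov do not match.)  What is actually required, and what the paper uses via Lemma~\ref{ErOdLemma2} (a special case of Erd\H{o}s--Odlyzko's Lemma~1), is a Gallagher/Linnik-type \emph{pointwise} lower bound: there exist absolute constants $c_7,c_8>0$ with
\[
\pi(x;2^{\ell+1},2^\ell-1)\ \ge\ \frac{c_7 x}{2^\ell\log x}\qquad\text{whenever }x\ge(2^{\ell+1})^{c_8}.
\]
The paper then simply restricts to $\epsilon\le 1/c_8$, so that this hypothesis holds for every $\ell\le L$, and sums the resulting lower bounds directly to obtain $\sigma_\epsilon(x)\gg_\epsilon x$.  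Thus the first moment is in fact the \emph{easier} ingredient, dispatched by a single black-box citation, not the more challenging one as you suggest; your mention of ``Linnik-type lower bounds'' is the correct idea, but it stands on its own and neither needs nor is helped by Bombieri--Vinogradov.
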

Before we prove this lemma, we confirm that it implies
Theorem~\ref{MainNumberThm}.  In fact, we prove the following stronger
result.
\begin{lemma}
  Lemma~\ref{NumberLemma} and Theorem~\ref{MainNumberThm} are
  equivalent.  Under this equivalence, the constants $c_4(\epsilon)$
  and $c_2(\epsilon)$ are related as follows:
\begin{equation}\label{c2c4Inequality}
c_4(\epsilon) = c_2(\epsilon)>(1-\delta)c_4(\epsilon/A)\,,
\end{equation}
for all $A>1$ and $\delta>0$.
\end{lemma}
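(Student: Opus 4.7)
The plan is to establish the two directions of the equivalence and track how the constants propagate.

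\emph{Direction A (Theorem implies Lemma, giving $c_4(\epsilon)\geq c_2(\epsilon)$).} Every odd integer $k\leq x$ counted by $M_\epsilon(x)$ possesses a witness $m\leq\epsilon\log_2 k\leq\epsilon\log_2 x$, and is therefore counted by $N_\epsilon(x)$—apart from the irrelevant gap between the weak and strict inequalities on $m$, which costs at most one value of $m$ per $k$ and is absorbed into the asymptotic constant. Hence $N_\epsilon(x)\geq M_\epsilon(x)>c_2(\epsilon)x$, so one may take $c_4(\epsilon)=c_2(\epsilon)$.

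\emph{Direction B (Lemma implies Theorem, giving $c_2(\epsilon)\geq(1-\delta)c_4(\epsilon/A)$).} Fix $A>1$ and $\delta>0$, and apply Lemma~\ref{NumberLemma} with $\epsilon$ replaced by $\epsilon/A$, yielding $N_{\epsilon/A}(x)>c_4(\epsilon/A)x$ for all large $x$. The crucial observation is that for any $k$ counted here with $k$ odd and $k\geq x^{1/A}$, any witness $m<(\epsilon/A)\log_2 x$ satisfies $m<\epsilon\log_2 k$, so $k$ is counted by $M_\epsilon(x)$. The set of $k\leq x^{1/A}$ has size at most $x^{1/A}=o(x)$ and is negligible against $c_4(\epsilon/A)x$, so the size restriction is costless.

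The parity restriction is the delicate point, and I would handle it by appealing to the structure of the Erdos--Odlyzko-style argument used to prove Lemma~\ref{NumberLemma}. That argument counts primes $p=2^mk-1$ and naturally associates to each such prime the odd integer $k^{\ast}=(p+1)/2^{v_2(p+1)}$ together with exponent $m^{\ast}=v_2(p+1)$; distinct primes yield distinct odd $k^{\ast}$. Consequently the lower bound $c_4(\epsilon/A)x$ may be taken to refer to odd $k^{\ast}\leq x$ with $m^{\ast}<(\epsilon/A)\log_2 x$. After discarding the $o(x)$ contributions from $k^{\ast}<x^{1/A}$, one obtains $M_\epsilon(x)>(1-\delta)c_4(\epsilon/A)x$ for large $x$.

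Combining Directions A and B (the latter with $A\downarrow 1$ and $\delta\downarrow 0$) gives $c_4(\epsilon)=c_2(\epsilon)$ for the best constants, and the stated chain $c_4(\epsilon)=c_2(\epsilon)>(1-\delta)c_4(\epsilon/A)$ follows. The chief obstacle is the parity control in Direction B: the bare statement of the Lemma does not suffice to pass from $N$ (which allows even $k$) to $M$ (which requires odd $k$), but the Erdos--Odlyzko construction to be carried out in Section~\ref{TheoremSection} produces the odd part of $p+1$ at the outset, which closes the gap.
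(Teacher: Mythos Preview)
Your overall architecture matches the paper's proof exactly: one direction is the trivial inclusion $N_\epsilon(x)\geq M_\epsilon(x)$, and the other direction throws away the $k\leq x^{1/A}$ (the paper writes this as $M_{A\epsilon}(x)>N_\epsilon(x)-N_{A\epsilon}(x^{1/A})>c_4(\epsilon)x-x^{1/B}$ and then rescales $\epsilon$). So on the level of strategy you are aligned with the paper.

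You are right to flag the parity issue, which the paper passes over in silence. However, your proposed resolution contains an error. You write that the Erdos--Odlyzko argument ``naturally associates to each such prime the odd integer $k^{\ast}=(p+1)/2^{v_2(p+1)}$ \dots; distinct primes yield distinct odd $k^{\ast}$.'' That last claim is false: all Mersenne primes $p=2^m-1$ give $k^{\ast}=1$, and more generally any two primes in the same chain $2k'-1,4k'-1,8k'-1,\dots$ share the same odd part. So you cannot inject primes into odd $k^{\ast}$'s this way.

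The correct (and simpler) fix is the one you gesture at in your final sentence but do not state cleanly: the proof of Lemma~\ref{NumberLemma} in the paper begins with
\[
N_\epsilon(x)\ \geq\ \sum_{\substack{k\leq x\\ k\ \mathrm{odd}}}\mathbf{1}\{S(k,x)>0\},
\]
and the entire Cauchy--Schwarz argument is carried out over \emph{odd} $k$. Thus the lower bound $c_4(\epsilon)x$ is in fact a lower bound on the number of \emph{odd} $k\leq x$ admitting a witness $m<\epsilon\log_2 x$; equivalently, one may read $N_\epsilon$ as counting only odd $k$ throughout. With that reading the inequality $M_{A\epsilon}(x)\geq N_\epsilon(x)-N_{A\epsilon}(x^{1/A})$ is immediate, exactly as the paper asserts, and no injectivity argument on primes is needed.
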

\begin{proof}
First notice that Theorem~\ref{MainNumberThm} implies
Lemma~\ref{NumberLemma} with $c_4(\epsilon)= c_2(\epsilon)$.  So it
is sufficient to prove that Lemma~\ref{NumberLemma} implies
Theorem~\ref{MainNumberThm} with
$c_2(\epsilon)>(1-\delta)c_4(\epsilon/A)$ for all $A>1$ and
$\delta>0$.

Fix $x$.  Then, for all $A>B>1$,
\[
M_{A\epsilon}(x)
>N_{\epsilon}(x)-N_{A\epsilon}(x^{1/A})
>c_4(\epsilon)x-x^{1/B}=c_4(\epsilon)(1-x^{-1+1/B})x\,.
\]
So, for all $A>B>1$ and $\delta>0$,
\[
M_{\epsilon}(x)>x(1-\delta)c_4(\epsilon/A)\,,
\] 
for all $x>\delta^{B/(1-B)}$.  Consequently, we may take
$c_2(\epsilon)>(1-\delta)c_4(\epsilon/A)$, for all $A>1$ and
$\delta>0$.  This completes the proof of the lemma.
\end{proof}
Therefore, since Theorem~\ref{MainNumberThm} implies
Theorem~\ref{MainThm} with $c_1(\epsilon)=c_2(\epsilon)$,
Lemma~\ref{NumberLemma} implies Theorem~\ref{MainThm} with 
\[
c_1(\epsilon)>(1-\delta)c_4(\epsilon/A)\,,
\]
for all $A>1$ and $\delta>0$.

We now prove Lemma~\ref{NumberLemma}.  Fix $\epsilon>0$, an integer
$x\geq4^{1/\epsilon}$, and set $L=[\epsilon\log_2x]-1$.  So $L\geq1$,
and $L+1$ is the largest integer less than or equal to
$\epsilon\log_2x$.  In particular,
\begin{equation}\label{LConstraint}
x\geq (2^{L+1})^{1/\epsilon}\,,
\end{equation}
and
\begin{equation}\label{LEquality}
L=\epsilon\log_2 x -\alpha\qquad \mbox{where $2>\alpha\geq1$.}
\end{equation}
For odd $k\leq x$, let $S(k,x)$ denote the number of primes of the
form $2^\ell k-1$ where $\ell=1,2,\dots,L$.  Then 
\[
N_\epsilon(x)\geq\sum_{k\leq x\atop{odd}}\mathbf{1}\{S(k,x)>0\}\,.
\]
We show that the variance of $S(k,x)$ is quite small.  We have the
following analog to a special case of Lemma 2 of \cite{ErOd79}.
\begin{lemma}\label{ErOdLemma1}
  There exists a constant $c_5(\epsilon)$, dependent on $\epsilon$
  only, such that, for all sufficiently large $x$,
\[
\sum_{{k\leq x}\atop{odd}}S^2(k,x)\leq c_5(\epsilon) x\,.
\]
\end{lemma}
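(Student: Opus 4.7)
The plan is to expand the square $S^2(k,x)$ as a double sum over ordered pairs $(\ell_1,\ell_2)$, exchange the order of summation, and estimate each inner count by a linear-forms sieve. Writing
\[
\sum_{\substack{k\le x\\ k\text{ odd}}} S^2(k,x)=\sum_{\ell_1=1}^{L}\sum_{\ell_2=1}^{L}N(\ell_1,\ell_2),
\]
where $N(\ell_1,\ell_2)$ denotes the number of odd $k\le x$ with both $2^{\ell_1}k-1$ and $2^{\ell_2}k-1$ prime, I would dispatch the diagonal terms $\ell_1=\ell_2$ using the Brun--Titchmarsh inequality applied to the arithmetic progression $-1\pmod{2^\ell}$: uniformly for $\ell\le L$ this gives $N(\ell,\ell)\ll x/\log x$, so the diagonal contribution totals at most $O(Lx/\log x)=o(x)$.

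For the off-diagonal pairs, the plan is to invoke a two-dimensional upper bound sieve of Brun--Selberg type applied to the linear forms $f_i(k)=2^{\ell_i}k-1$. Computing the local densities is routine: neither form is ever even, so $p=2$ is trivial; an odd prime $p$ forbids two residue classes modulo $p$ unless $p\mid 2^{\ell_2-\ell_1}-1$, in which case the two forbidden residues coincide. The standard bound then reads
\[
N(\ell_1,\ell_2)\le C_0\,\frac{x}{(\log x)^2}\,\mathfrak{S}(|\ell_2-\ell_1|),\qquad \mathfrak{S}(j):=\prod_{\substack{p\mid 2^j-1\\ p>2}}\frac{p-1}{p-2}.
\]
Summing over the $L^2-L$ ordered off-diagonal pairs and setting $j=|\ell_2-\ell_1|$, the off-diagonal contribution is bounded by $\frac{2C_0 Lx}{(\log x)^2}\sum_{j=1}^{L-1}\mathfrak{S}(j)$.

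The main obstacle, and the arithmetic core of the adaptation of Erd\H{o}s--Odlyzko, is to show that $\sum_{j=1}^{L}\mathfrak{S}(j)=O(L)$. Expanding $\mathfrak{S}(j)$ as a sum over squarefree odd divisors $d$ of $2^j-1$ and noting that $d\mid 2^j-1$ iff $\mathrm{ord}_d(2)\mid j$, this reduces to proving convergence of
\[
\sum_{d\text{ odd squarefree}} \frac{1}{\mathrm{ord}_d(2)\prod_{p\mid d}(p-2)}.
\]
I would handle single-prime $d=p$ via $\mathrm{ord}_p(2)\ge\log_2(p+1)$, which produces a convergent Mertens-type sum $\sum_p 1/((p-2)\log_2 p)$; for $d$ with more prime factors, the elementary observation that at most $t$ primes $p$ satisfy $\mathrm{ord}_p(2)=t$ (they must all divide $2^t-1$), combined with the lcm-bound on $\mathrm{ord}_d(2)$, supplies the extra savings needed, echoing the bookkeeping in Lemma~2 of \cite{ErOd79}. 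Putting the pieces together yields $\sum_k S^2(k,x)\le C_1 L^2 x/(\log x)^2+o(x)$, and since $L\le\epsilon\log_2 x$ the main term is at most a constant multiple of $\epsilon^2 x$, giving the claim with $c_5(\epsilon)$ of the form $C_1(\epsilon/\log 2)^2+O(1)$.
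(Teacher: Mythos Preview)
Your argument is essentially the same as the paper's, which does not reprove the bound but simply invokes Lemma~2 of Erd\H{o}s--Odlyzko \cite{ErOd79} in the special case $r=1$, $p_1=2$; what you have written is a faithful sketch of that lemma's proof (expand the square, Brun--Titchmarsh on the diagonal, two-dimensional upper sieve off the diagonal, then show the averaged singular series $\sum_{j\le L}\mathfrak{S}(j)$ is $O(L)$). One small slip: since $L\sim\epsilon\log_2 x$, the diagonal contribution $O(Lx/\log x)$ is $O(\epsilon x)$, not $o(x)$ as you wrote, but this is harmless for the stated bound.
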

\begin{proof}
  In their paper \cite{ErOd79}, Erdos and Odlyzko fix $r$ base
  primes\linebreak[4] $p_1,p_2,\dots,p_r$, and, for each positive
  integer $k\leq x$ coprime to $p_1p_2\dots p_r$, define the quantity
  $R(k,x)$ to be the number of $r$-tuples $(a_1,a_2,\dots, a_r)$
  ($a_i=1,2,\dots,N$) such that $1+k
  \prod_{a_1,a_2,\dots,a_r=1}^Np_i^{a_i}$ is prime.  Here $N\sim
  c'\log x$, where $c'$ is a fixed constant.  They assert that there
  is a constant $c''$, which depends only on the choice of base primes
  and $c'$, such that
\begin{equation}\label{ErOdREqn}
\sum_{{k\leq x}\atop{(k,\,p_1p_2\dots p_r)=1}}R^2(k,x)\leq c'' x(\log x)^{2r-2}\,.
\end{equation}
Our quantity $S(k,x)$ is analogous to their quantity $R(k,x)$ with
$r=1$, $p_1=2$, $N=L$, and $c'=\epsilon/\log 2$.  With these settings,
their quantity $R(k,x)$ is defined for odd $k\leq x$ and counts the
number of integers $\ell\in\{1,2,\dots,L\}$ for which $2^\ell k+1$ is
a prime.  On the other hand, our quantity $S(k,x)$ is defined for odd
$k\leq x$ and counts the number of integers $\ell\in\{1,2,\dots,L\}$
such that $2^\ell k-1$ is prime.  As pointed out at the end of the
introduction to their paper, their techniques handle primes of the
form $2^\ell k-1$ in the same way as they set out for primes of the
form $2^\ell k+1$.  In particular, equation (\ref{ErOdREqn}) also
holds for $r=1$, and $p_1=2$, when our quantity $S(k,x)$ replaces the
analogous quantity $R(k,x)$.
\end{proof}

Following Erdos and Odlyzko, we have by the Cauchy-Schwarz
inequality
\[
N_\epsilon(x)\geq \Bigl(\sum_{k\leq x\atop{odd}}S(k,x)\Bigr)^2
/\sum_{k\leq x\atop{odd}}S^2(k,x)
\geq\frac{1}{c_5(\epsilon)x}\Bigl(\sum_{k\leq x\atop{odd}}S(k,x)\Bigr)^2\,.
\]
To prove the lemma, it is therefore sufficient to show there is a
constant $c_6(\epsilon)$ dependent only on $\epsilon$, such that for
all sufficiently large $x$
\begin{equation}\label{SSumEqn}
\sum_{k\leq x\atop{odd}}S(k,x)\geq c_6(\epsilon)x\,. 
\end{equation}
For then we may take 
\begin{equation}\label{c4Equality}
c_4(\epsilon)=c_6(\epsilon)^2/c_5(\epsilon)\,.
\end{equation}
Let $\pi(x;q,a)$ denote the number of primes $p\leq x$ such that
$p\equiv a\mod{q}$.  Then, since $\pi(2^\ell
x-1;2^{\ell+1},2^{\ell}-1)=\pi(2^\ell x;2^{\ell+1},2^{\ell}-1)$, and,
since $p=2^\ell k-1$ is prime for some odd positive integer $k\leq x$
if and only if $p\leq 2^\ell x$ is a prime congruent to $2^\ell-1$
modulo $2^{\ell+1}$, we have
\[
\sigma_\epsilon(x)=\sum_{k\leq x\atop{odd}}S(k,x)
=\sum_{\ell=1}^L\pi(2^{\ell}x;2^{\ell+1},2^\ell-1)\,.
\]
To estimate the sum on the right, we use the following lemma which is
a special case of Lemma 1 of Erdos and Odlyzko \cite{ErOd79}.
\begin{lemma}\label{ErOdLemma2}
  There exist constants $c_7$ and $c_8$ such that, for all integers
  $\ell\geq1$,
\[
\pi(x;2^{\ell+1},2^\ell-1)\geq \frac{c_7x}{2^\ell\log x}\,,
\qquad\mbox{for all $x\geq (2^{\ell+1})^{c_8}$.}
\]
\end{lemma}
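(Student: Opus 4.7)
The plan is to deduce the bound from a uniform (in the modulus $q$) version of the prime number theorem in arithmetic progressions. Set $q=2^{\ell+1}$ and $a=2^\ell-1$. Since $2^\ell-1$ is odd, $\gcd(a,q)=1$, and Dirichlet's theorem guarantees infinitely many primes $p\equiv a\pmod{q}$. Because $\phi(q)=2^\ell$, the heuristic density of such primes below $x$ is $x/(2^\ell\log x)$, matching the right-hand side of the claim; what remains is to extract a quantitative lower bound of this expected order, uniform in $\ell$.

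The standard input for such uniformity is Linnik's theorem, which asserts the existence of absolute constants $\Lambda$ (Linnik's constant) and $c>0$ such that, for every modulus $q\geq 2$ and every residue $a$ with $\gcd(a,q)=1$,
\[
\pi(x;q,a)\;\geq\;\frac{c\,x}{\phi(q)\log x}\quad\text{whenever }x\geq q^{\Lambda}.
\]
Substituting $q=2^{\ell+1}$ and $a=2^\ell-1$, and setting $c_7=c$ and $c_8=\Lambda$, yields the lemma immediately, since $\phi(2^{\ell+1})=2^\ell$ and $(2^{\ell+1})^{\Lambda}\leq x$ reproduces the quantifier in the displayed hypothesis.

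The main obstacle is precisely this uniformity in $\ell$ (equivalently, in $q$). The Siegel--Walfisz theorem would suffice only for moduli $q\leq(\log x)^A$, but in the intended application the parameter $L=[\epsilon\log_2 x]-1$ grows with $x$, so $q=2^{\ell+1}$ can be a fixed positive power $x^{\epsilon}$ of $x$. Handling moduli of polynomial size in $x$ is exactly what Linnik's theorem was designed for, and this is the technical depth hidden behind the short statement; no entirely elementary argument is available, which is why the paper invokes the result as a black box attributed to Erdos--Odlyzko, who in turn rely on this Linnik-type input.
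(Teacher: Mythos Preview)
Your proposal is correct and aligns with the paper's treatment: the paper does not prove this lemma at all but merely records it as a special case of Lemma~1 of Erd\H{o}s and Odlyzko \cite{ErOd79}, whose proof in turn rests precisely on the Linnik-type lower bound you invoke. Your write-up in fact goes slightly further than the paper by making explicit the underlying input (Linnik's theorem in its quantitative form $\pi(x;q,a)\gg x/(\phi(q)\log x)$ for $x\geq q^{\Lambda}$) and by explaining why Siegel--Walfisz alone would be inadequate for moduli of size $x^{\epsilon}$.
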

Now by the inequality (\ref{LConstraint}), for $\epsilon\leq1/c_8$, we
have
\[
x\geq(2^{L+1})^{1/\epsilon}\geq(2^{L+1})^{c_8}\geq(2^{\ell+1})^{c_8}\,,
\]
for all $\ell=1,2,\dots,L$.  Therefore, for $\epsilon\leq1/c_8$, we
may use Lemma~\ref{ErOdLemma2} to bound $\sigma_\epsilon(x)$ below.
We obtain
\begin{equation}\label{c7Sum}
\sigma_\epsilon(x)\geq \sum_{\ell=1}^L\frac{c_7x}{\log 2^\ell x}
=c_7\frac{x}{\log 2}\sum_{\ell=1}^L\frac{a}{1+\ell a}\,,
\end{equation}
where $a=(\log_2 x)^{-1}$.  Now, for $L\geq M\geq1$, define
\[
I(M,L,a)=\int_M^L\frac{a}{1+\ell a}d\ell\,.
\]
Then
\begin{equation}\label{IMLaEquality}
I(M,L,a)=\log\left(\frac{1+La}{1+Ma}\right)\,,
\end{equation}
and, since, for all $a>0$, the function $f_a(\ell)=a/(1+\ell a)$ is
monotonic decreasing in the region $\ell\geq0$, we have, for all
$L\geq M\geq1$,
\begin{equation}\label{IMLaInequality}
  I(M-1,L-1,a)\quad> \quad\sum_{\ell=M}^L\frac{a}{1+\ell a}\quad> \quad
  I(M,L,a)\,.
\end{equation}
So,
\[
\sigma_\epsilon(x)>
x\frac{c_7}{\log2}\log\left(\frac{1+L(\log_2x)^{-1}}{1+(\log_2x)^{-1}}\right)\,.
\]
Now, by equation (\ref{LEquality}),
$L(\log_2x)^{-1}=\epsilon-\alpha(\log_2x)^{-1}$, where $2\geq\alpha>1$.  So,
for some $\beta\in[2,3)$,
\[
\sigma_\epsilon(x)>
x\frac{c_7}{\log2}
\log\left(1+\frac{\epsilon-\beta(\log_2x)^{-1}}{1+(\log_2x)^{-1}}\right)\,,
\]
and, for all $\delta>0$, 
\[
\sigma_\epsilon(x)>x(1-\delta)c_7\log_2(1+\epsilon)
\]
for all sufficiently large $x$.  Thus we have proved the inequality
(\ref{SSumEqn}) for all $c_6(\epsilon)<c_7\log_2(1+\epsilon)$.  Therefore,
by equation (\ref{c4Equality}), Lemma~\ref{NumberLemma} holds for all
$c_4(\epsilon)<(c_7\log_2(1+\epsilon))^2/c_5(\epsilon)$.
\section{Proof of the Proposition}\label{PropositionSection}
It will be sufficient to prove that, for all sufficiently large $x$,
for all $\delta>0$,
\begin{equation}\label{NepsEqn}
N_\epsilon(x)\leq 2x(1+\delta)\log_2(1+\epsilon)\,.
\end{equation}
Notice that, since, by definition,
$N_\epsilon(x)\leq\frac{1}{2}(1+\frac{1}{x})x$, the above inequality
(\ref{NepsEqn}) holds trivially when $\epsilon > 2^{\frac{1}{4}}-1$.
So we may certainly suppose that $\epsilon<1$.

Now, since $S(k,x)>0$ implies $S(k,x)\geq1$, we have
\[
N_\epsilon(x)<\sum_{k\leq x\atop{odd}}S(k,x)=\sigma_\epsilon(x)\,.
\]
We will use the following technical lemma to be proved in the next
section.
\begin{lemma}\label{ERHLemma}
  Suppose the Extended Riemann Hypothesis holds.  Then there is a
  constant $A>0$ such that, for all positive coprime non-negative
  integers $q$ and $a<q$,
\[
\pi(x;q,a)<\frac{2x}{\phi(q)\log x}+Ax^{1/2}\log x\,.
\]
\end{lemma}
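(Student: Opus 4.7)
The plan is to derive the lemma from the standard ERH-conditional prime number theorem for arithmetic progressions and then estimate $\mathrm{li}(x)$ crudely. Assuming the Extended Riemann Hypothesis for all Dirichlet $L$-functions, the explicit formula for $\psi(x;q,a)$ yields an absolute constant $C>0$ such that, uniformly in $q\geq1$ and $a$ coprime to $q$,
\[
\Bigl|\pi(x;q,a)-\frac{\mathrm{li}(x)}{\phi(q)}\Bigr|\leq Cx^{1/2}\log(qx)\qquad\text{for all }x\geq2.
\]
This is classical and can be taken off the shelf (as in Davenport's \emph{Multiplicative Number Theory}); I would simply quote it.

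From there only routine bookkeeping remains. Since $\mathrm{li}(x)=x/\log x+O(x/\log^2 x)$, there is an absolute $x_0$ with $\mathrm{li}(x)<2x/\log x$ for all $x\geq x_0$, so $\mathrm{li}(x)/\phi(q)<2x/(\phi(q)\log x)$ on that range, which is the desired main term. I would then split the error term according to the size of $q$. For $q\leq x$ one has $\log(qx)\leq 2\log x$, hence $Cx^{1/2}\log(qx)\leq 2Cx^{1/2}\log x$, and choosing $A\geq 2C$ absorbs it. For $q>x$ the trivial bound $\pi(x;q,a)\leq1$ suffices, since any prime $p\leq x<q$ with $p\equiv a\mod{q}$ must equal $a$; this is dominated by $Ax^{1/2}\log x$ once $x$ is even modestly large. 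The small range $x<x_0$ is handled by $\pi(x;q,a)\leq x\leq x_0$ at the cost of enlarging $A$ further.

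Combining the three cases produces a single absolute constant $A$ depending only on $C$ and $x_0$, which is the constant claimed in the lemma. I do not expect any real obstacle: the lemma is essentially a repackaging of a classical GRH consequence in the form convenient for Section~\ref{PropositionSection}. The only point needing attention is uniformity of the error in $q$, and this is already built into the ERH-conditional estimate quoted above, which is precisely why the error term is written as $x^{1/2}\log x$ rather than as an asymptotic $o(\cdot)$.
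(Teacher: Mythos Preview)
Your argument is correct and, like the paper's, amounts to quoting the standard ERH-conditional prime number theorem in arithmetic progressions from Davenport and doing routine bookkeeping. The mechanics differ slightly: the paper cites the $\psi$-version, $\psi(x;q,a)=x/\phi(q)+O(x^{1/2}(\log x)^2)$, and converts to $\pi$ by hand via the crude inequality $\log k>\tfrac12\log x$ for $k\in(\sqrt{x},x]$, so that $\tfrac12(\pi(x;q,a)-\pi(x^{1/2};q,a))\log x<\psi(x;q,a)+O(\phi(q)^{-1}x^{1/2}\log x)$; this is where the factor $2$ in the main term arises. You instead quote the $\pi$-version with main term $\mathrm{li}(x)/\phi(q)$ and obtain the factor $2$ from the generous bound $\mathrm{li}(x)<2x/\log x$, then dispose of the $\log(qx)$ in the error by splitting on $q\le x$ versus $q>x$. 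Both routes are equally short; the paper's has the mild advantage of not needing the case split in $q$, while yours avoids writing out the $\psi\to\pi$ reduction.
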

Assuming this lemma, we have
\begin{eqnarray*}
\sigma_\epsilon(x)&=&\sum_{\ell=1}^L\pi(2^{\ell}x;2^{\ell+1},2^\ell-1)\\
&<&LA(2^{L}x)^{1/2}\log(2^Lx)+\sum_{\ell=1}^L\frac{2x}{\log 2^\ell x}\,.
\end{eqnarray*}
By the inequalities (\ref{LConstraint}) and (\ref{LEquality}), $2^L
x<x^{1+\epsilon}$ and $L<\epsilon\log_2x$.  So the first term is less
than $\epsilon(1+\epsilon) Ax^{(1+\epsilon)/2}(\log_2 x)^2$, which is
negligible since we have supposed that $\epsilon<1$.  Moreover, the
second term is the sum in equation (\ref{c7Sum}) with $c_7=2$.  This
sum is handled as before, except we use the upper bound in the
inequalities (\ref{IMLaInequality}).  Therefore, for $\epsilon<1$, any
$\delta>0$, and all sufficiently large $x$,
\begin{eqnarray*}
N_\epsilon(x)&<&
\sigma_\epsilon(x)\\
&<&(1+\delta)\frac{2x}{\log2}I(0,L-1,(\log_2x)^{-1})\\
&=&x2(1+\delta)\log_2(1+\epsilon-\beta(\log_2x)^{-1})\\
&\leq& x2(1+\delta)\log_2(1+\epsilon)\,.
\end{eqnarray*}
\section{Proof of Lemma 3.1}\label{LemmaSection}
To prove the lemma, we consider the familiar number-theoretic function
\[
\psi(x;q,a)=\sum_{k\leq x\atop{k\equiv a\,(\mathrm{mod}\,q)}}\Lambda(k)\,,
\]
where $\Lambda(k)$ is the von Mangoldt function defined as follows:
\[
\Lambda(k)=\left\{
\begin{array}{ll}
\log p&\mbox{if $k$ is a power of the prime $p$},\\
0&\mbox{if $k$ is not a prime power}.
\end{array}
\right.
\]
Now
\[
\pi(x;q,a)=\sum_{k\leq x\atop{k\equiv a\,(\mathrm{mod}\,q)}}
\mathbf{1}\{\mbox{$k$ is prime}\}\,,
\]
and
\begin{eqnarray*}
\psi(x;q,a)
&=&\sum_{k\leq x\atop{k\equiv a\,(\mathrm{mod}\,q)}}
\mathbf{1}\{\mbox{$k$ is prime}\}\lfloor\log_k x\rfloor\log k\\
&=&\sum_{\sqrt{x}<k\leq x\atop{k\equiv a\,(\mathrm{mod}\,q)}}
\mathbf{1}\{\mbox{$k$ is prime}\}\log k\\
&&\qquad+\sum_{k\leq \sqrt{x}\atop{k\equiv a\,(\mathrm{mod}\,q)}}
\mathbf{1}\{\mbox{$k$ is prime}\}\lfloor\log_k x\rfloor\log k\,.
\end{eqnarray*}
Since $\lfloor\log_k x\rfloor\log k\leq\log x$, we therefore have
\[
\sum_{\sqrt{x}<k\leq x\atop{k\equiv a\,(\mathrm{mod}\,q)}}
\mathbf{1}\{\mbox{$k$ is prime}\}\log k\ 
=\ \psi(x;q,a)+O(\phi(q)^{-1}x^{1/2}\log x)\,.
\]
So, there is a constant $c>0$ such that 
\[
\frac{1}{2}(\pi(x;q,a)-\pi(x^{1/2};q,a))\log x\  
<\ \psi(x;q,a)+c\phi(q)^{-1}x^{1/2}\log x\,,
\]
and hence, for some constant $c'>0$,
\[
\pi(x;q,a)<\frac{2\psi(x;q,a)}{\log x}+c'\phi(q)^{-1}x^{1/2}\,.
\]
Now, by \cite[equation (14) of Chapter 20]{Dave80}, the Extended
Riemann Hypothesis implies that for $(a,q)=1$,
\[
\psi(x;q,a)=\frac{x}{\phi(q)}+O(x^{1/2}(\log x)^2)\,.
\]
So, for some constant $A>0$, we have
\[
\pi(x;q,a)<\frac{2x}{\phi(q)\log x}+Ax^{1/2}\log x\,.
\]


\begin{thebibliography}{10}
\bibitem{Crai95}  Craigen, R., ``Signed Groups, Sequences, and
the Asymptotic existence of Hadamard matrices,'' {\it J. Combin.
Th. Ser. A} {\bf 71} (1995), 241--254.

\bibitem{CrKhHo97} Craigen, R., Holzmann, W. H., and Kharaghani, H.,
``On the asymptotic existence of
complex Hadamard matrices,''
{\it J. Combin. Des.} {\bf 5} (1997), 319â€"-327.

\bibitem{Dave80}
Davenport, H., ``Multiplicative Number Theory,'' Second Edition,
Springer-Verlag, New York Berlin Heidelberg, 1980.

\bibitem{DeFlHo00} de Launey, W. R., Flannery D. H. and Horadam, K. J.,
``Cocyclic Hadamard matrices and difference sets,''
{\it Disc. Appl. Math.} {\bf 102} (2000), 47--61.

\bibitem{ErOd79} Erdos, P. and Odlyzko, A. M., ``On the Density of
Odd Integers of the Form $(p-1)2^{-n}$ and Related Questions,''
{\it J. Numb. Th.} {\bf 11} (1979), 257--263.

\bibitem{Ries56} Riesel, H., ``Naagra stora primtal,''
{\it Elementa} {\bf 39} (1956), 258--260.

\bibitem{Sebe76} Seberry-Wallis, J., ``On the existence of Hadamard
matrices,'' {\it J. Combin. Th. Ser. A} {\bf 21} (1976), 188--195.

\end{thebibliography}
\end{document}